 \newtheorem{thm}{Theorem}[section]
 \newtheorem{cor}[thm]{Corollary}
 \newtheorem{lem}[thm]{Lemma}
 \newtheorem{prop}[thm]{Proposition}
 \theoremstyle{definition}
 \newtheorem{defn}[thm]{Definition}
 \theoremstyle{remark}
 \numberwithin{equation}{section}
\begin{document}

%-------------------------------------------------------------------------
% editorial commands: to be inserted by the editorial office
%
%\firstpage{1} \volume{228} \Copyrightyear{2004} \DOI{003-0001}
%
%
%\seriesextra{Just an add-on}
%\seriesextraline{This is the Concrete Title of this Book\br H.E. R and S.T.C. W, Eds.}
%
% for journals:
%
%\firstpage{1}
%\issuenumber{1}
%\Volumeandyear{1 (2004)}
%\Copyrightyear{2004}
%\DOI{003-xxxx-y}
%\Signet
%\commby{inhouse}
%\submitted{March 14, 2003}
%\received{March 16, 2000}
%\revised{June 1, 2000}
%\accepted{July 22, 2000}
%
%
%
%---------------------------------------------------------------------------
%Insert here the title, affiliations and abstract:
%

\title[Heisenberg's  and Hardy's Uncertainty Principles  ]
 {Heisenberg's  and Hardy's Uncertainty \\Principles   in   Real Clifford Algebras}

%----------Author 1

%\thanks{This work was completed with the support of our
%\TeX-pert.}
%----------Author 2
\author[R. Jday]{Rim Jday}
\address{ University of Tunis El Manar\br
 Faculty of Sciences Tunis\br 
  Laboratory Special
  Functions\br
  Harmonic Analysis
   and Analogue\br
 2092 Tunis\br
% University of Monastir \br
% Faculty of Sciences of Monastir \br
 % Tunisia\br}
 }
\email{rimjday@live.fr}
%----------classification, keywords, date
\subjclass{Primary 15A66;  Secondary 43A32.}

\keywords{Clifford algebras, Clifford-Fourier transform, Uncertainty principle, Hardy's theorem, Heisenberg's inequality.}

\date{Decembre 31, 20016}
%----------additions
%\dedicatory{To my boss}
%%% ----------------------------------------------------------------------

\begin{abstract}
Recently, many surveys are devoted to study the Clifford Fourier transform. Dealing with the real Clifford Fourier transform introduced by Hitzer \cite{9}, we establish  analogues of the classical Heisenberg's inequality and Hardy's theorem in the real Clifford algebra $Cl(p,q)$. 
\end{abstract}

%%% ----------------------------------------------------------------------
\maketitle
%%% ----------------------------------------------------------------------
%\tableofcontents
\section{Introduction}
According to the relevance of the Clifford Fourier transform in applied mathematics, many works focus  on the study of Fourier transform in the framework of Clifford algebras. In \cite{1}, the Fourier transform is extended to a multivector valued function-distributions with compact support. Then several definitions of Fourier transform  in Clifford algebras are given such as Quaternion Fourier transform (QFT) \cite{8,7} and Clifford Fourier transform (CFT) \cite{2,3,6}. We refer the reader to \cite{key-12} for more details of  Fourier transforms over Clifford algebras.

Regarding the fundamental property of the complex imaginary unit $j\in\mathbb{C}$, i.e. $j^2=-1$, researchers concentrate  on finding  Clifford numbers satisfying this property (square roots minus one in Clifford algebras) \cite{4,5}. Replacing $j$ by a square root of $-1$ allows to define a generalized Fourier transform in Clifford algebras \cite{6,9} .
Our aim in the present paper is to obtain uncertainty principle for the Clifford Fourier transform  stated by E. Hitzer in \cite{9}. 

The paper is structured  as follows. In section \ref{section2}, we recall necessary background knowledge of Clifford algebras and some results that will be usefull in the sequel. In section \ref{section3},  we adopt the real Clifford Fourier transform of \cite{9} and we review its properties. In section \ref{section4},  we provide Heisenberg's inequality and Hardy's theorem  for the   Clifford Fourier transform.  
\section{Preliminaries}\label{section2}

The Clifford  geometric algebra $Cl(p,q)$ is defined as a non-commutative algebra generated by the basis $\{e_1,e_2,..,e_n\}$, with $n=p+q$, satisfying the multiplication rule:
\begin{equation}
e_ke_l+e_le_k=2\epsilon_k\delta_{k,l},\quad k,l=1,..,n, 
\end{equation}
with $\mathcal{\epsilon}_k=+1$ for $k=1,..,p$ and $\mathcal{\epsilon}_k=-1$ for $ k=p+1,..,n$.
Here $\delta_{k,l}$ denotes the Kronecker symbol, i.e., $\delta_{k,l}=1$  for $k=l$ and  $\delta_{k,l}=0$ for $k\neq l$.\\
This algebra can be decomposed as 
 \begin{equation}
 \displaystyle Cl(p,q)=\bigoplus_{k=0}^n Cl^k(p,q),
  \end{equation}
 with $Cl^k(p,q)$ the space of k-vectors given  by
  \begin{equation}
Cl^k(p,q):={\rm span}\{e_{i_1}..e_{i_k},i_1<..<i_k\}.
  \end{equation}
  Hence  $\left\{e_A=e_{i_1}e_{i_2}\cdots e_{i_k},\, A\subseteq \{1,\cdots,n\}, \, 1\leq i_1<\cdots <i_k\leq n \right\} $ with \\ $e_{\emptyset}=1$,  presents a $2^n$-dimensional graded basis  of $Cl(p,q)$.\\
Every  element  $\alpha$ of $ Cl(p,q)$ (Clifford number) can be written as
\begin{equation}
\alpha=\displaystyle\sum_A \alpha_Ae_A=\left\langle \alpha\right\rangle_0+\left\langle \alpha\right\rangle_1+\cdots+\left\langle \alpha\right\rangle_n,
\end{equation}
with $\alpha_A$ real numbers.
Here 
 $\left\langle \cdot\right\rangle_k$ is the grade $k $-part of $\alpha$. As exemple, $<\cdot>_0$ denotes the  scalar part,  $<\cdot>_1$ is the vector part and $<\cdot>_2$ is the bivetor part.   \\
 A vector $x\in\mathbb{R}^{p,q}$ can be identified by
 \begin{equation}
 x=\displaystyle\sum_{l=1}^nx_le^l=\sum_{l=1}^nx^le_l, \quad \text{with}\quad e^l:=\epsilon_le_l.
 \end{equation}
  The scalar product and outer product  for  a $k$-vector $A_k\in Cl(p,q)$ and an $s$-vector $B_s\in Cl(p,q)$ are defined, respectively, by 
\begin{equation}
A_k\ast B_s=\left\langle A_k B_s\right\rangle_0,\qquad A_k\wedge B_s=\left\langle A_k B_s\right\rangle_{k+s}.
\end{equation}
In $Cl(p,q)$, the complex  and the quaternion conjugations are replaced by the principle reverse which is introduced  for all $\alpha \in Cl(p,q)$ by 
\begin{equation}
\widetilde{\alpha}=\displaystyle \sum_{k=0}^n (-1)^{\frac{k(k-1)}{2}}\overline{ \left\langle \alpha\right\rangle_k},
%=\sum_{A} (-1)^{\frac{\#A(\#A-1)}{2}} \alpha_A\overline{e_A},
\end{equation}
where $\overline{e_A}=\epsilon_{i_1}e_{i_1}\cdots\epsilon_{i_k}e_{i_k}$, $1\leq i_1<i_2<\cdots<i_k\leq n$. In the case of $\mathbb{C}\otimes Cl(p,q)$, we add the rule $\widetilde{j}=-j$ with $j\in \mathbb{C}$ is the complex imaginary unit.\\
\noindent Note that  
$\widetilde{e_A}\ast e_B=\delta_{A,B},\, \text{for all}\quad 1\leq A,B\leq 2^n$. In particular, we have $e^l\ast e_k=e^l\cdot e_k=\delta_{k,l}$.\\
This leads to
\begin{equation}\label{geometric}
 M\ast \widetilde{N}=\displaystyle\sum_A M_AN_A,\quad \forall \, M,N\in Cl(p,q).
 \end{equation}
The modulus of  a multivector $M\in Cl(p,q)$  is given by  
\begin{equation}
|M|^2=M\ast \widetilde{M}=\displaystyle\sum_A M_A^2,
\end{equation}
from which wa can find Cauchy-Schwartz's inequality for multivectors:
\begin{equation}\label{Cauchy}
|M \ast\widetilde{N}|\leq |M||N|.
\end{equation} 
In the sequel, we will consider function defined on $\mathbb{R}^{p,q}$ and taking values  in $Cl(p,q)$. These functions can be expressed like follows
\begin{equation}
f(x)=\sum_{A}f_A(x)e_A,
\end{equation}
where $f_A$ are real-valued functions.
\begin{defn} Let $f,g: \mathbb{R}^{p,q}\rightarrow Cl(p,q)$. Then inner product of $f$ and $g$ is defined by 
\begin{equation}(f,g)=\displaystyle\int_{\mathbb{R}^{p,q}} f(x)\widetilde{g(x)}d^nx=\displaystyle\sum_{A,B}e_A\widetilde{e_B}\int_{\mathbb{R}^{p,q}}f_A(x)g_B(x)d^nx,
\end{equation}
with symmetric scalar part
\begin{equation}
<f,g>=\displaystyle\int_{\mathbb{R}^{p,q}}f(x)\ast \widetilde{g(x)}d^nx=\displaystyle\sum_A\int_{\mathbb{R}^{p,q}}f_A(x)g_A(x)d^nx,
\end{equation}
which induces the following  $L^2(\mathbb{R}^{p,q},Cl(p,q))$-norm
\begin{equation}\label{normf}
||f||^2:=<(f,f)>=\displaystyle \int_{\mathbb{R}^{p,q}}|f(x)|^2d^nx=\sum_A\int_{\mathbb{R}^{p,q}}f_A^2(x)d^nx.
\end{equation}
Then $L^2(\mathbb{R}^{p,q},Cl(p,q))$ is introduced as
\begin{equation}\displaystyle\qquad L^2(\mathbb{R}^{p,q},Cl(p,q))=\left\{f:\mathbb{R}^{p,q}\rightarrow Cl(p,q)|\quad ||f||<\infty\right\}.\end{equation}
\end{defn}
\noindent The vector derivative $\nabla$ of a function $f$ is defined by 
\begin{equation}
\nabla=\displaystyle\sum_{l=1}^ne^l\partial_l \quad \text{with}\quad \partial_l=\frac{\partial}{\partial_{x_l}}, \quad 1\leq l\leq n. 
\end{equation}
For an arbitrary vector $a\in \mathbb{R}^{p,q}$, the vector differential in the $a$-direction is given by 
\begin{equation}
a\cdot \nabla f(x)=\lim\limits_{\epsilon\rightarrow 0}\frac{f(x+\epsilon a)-f(x)}{\epsilon}.
\end{equation}

\begin{prop}[Integration of parts]\label{integration} \cite[Proposition 3.9]{6}
$$\displaystyle\int_{\mathbb{R}^n}f(x)[a \cdot \nabla g(x)]d^nx=\left[\int_{\mathbb{R}^{n-1}}f(x)g(x)d^{n-1}x\right]_{a\cdot x=-\infty}^{a\cdot x=\infty}-\int_{\mathbb{R}^n}[a\cdot \nabla f(x)] g(x)d^nx.$$
\end{prop}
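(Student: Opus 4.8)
The plan is to deduce the formula from the ordinary Leibniz rule together with Fubini's theorem, after observing that the Clifford-valued character of $f$ and $g$ plays no essential role. Expand $f=\sum_A f_Ae_A$ and $g=\sum_B g_Be_B$ with real-valued components, and recall that $a\cdot\nabla h(x)=\frac{d}{dt}h(x+ta)\big|_{t=0}$ is the classical directional derivative in the direction $a$, an $\mathbb{R}$-linear combination of the partial derivatives $\partial_l$ and hence a derivation, so that $a\cdot\nabla(fg)=(a\cdot\nabla f)\,g+f\,(a\cdot\nabla g)$ by $\mathbb{R}$-bilinearity of the Clifford product. Consequently it suffices to prove, for each pair of real components $u=f_A$, $v=g_B$,
\begin{equation*}
\int_{\mathbb{R}^n}\bigl(a\cdot\nabla(uv)\bigr)(x)\,d^nx=\left[\int_{\mathbb{R}^{n-1}}uv\,d^{n-1}x\right]_{a\cdot x=-\infty}^{a\cdot x=\infty},
\end{equation*}
after which one writes $f\,(a\cdot\nabla g)=a\cdot\nabla(fg)-(a\cdot\nabla f)\,g$, integrates, multiplies the componentwise identities by $e_Ae_B$, and sums over $A,B$.

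To prove the scalar identity I would choose a Cartesian coordinate system of $\mathbb{R}^n$ adapted to $a$: one coordinate $t$ running along the line $\mathbb{R}a$ and the remaining $n-1$ coordinates $y$ spanning a complement, arranged so that $a\cdot\nabla=\partial_t$ and $d^nx=dt\,d^{n-1}y$. Then $a\cdot\nabla(uv)=\partial_t(uv)$, and Fubini's theorem gives
\begin{equation*}
\int_{\mathbb{R}^n}\partial_t(uv)\,d^nx=\int_{\mathbb{R}^{n-1}}\Bigl(\int_{\mathbb{R}}\partial_t(uv)\,dt\Bigr)d^{n-1}y=\int_{\mathbb{R}^{n-1}}\bigl[uv\bigr]_{t=-\infty}^{t=\infty}\,d^{n-1}y .
\end{equation*}
Since the level sets of $t$ are precisely the hyperplanes $\{a\cdot x=\mathrm{const}\}$, returning to the original variables turns the right-hand side into the stated boundary term (up to the harmless normalization $|a|$, which equals $1$ for the unit vectors $a$ used afterwards). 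Equivalently, one may read the whole computation as the divergence theorem applied to the vector field $(uv)\,a$, whose flux escapes only through the two ends $a\cdot x\to\pm\infty$.

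The hard part is analytic rather than algebraic: the boundary term must be given a meaning, so one has to impose enough decay on $f$ and $g$ for $\lim_{a\cdot x\to\pm\infty}\int_{\mathbb{R}^{n-1}}f_Ag_B\,d^{n-1}x$ to exist, and enough integrability of $\partial_t(f_Ag_B)$ for Fubini to be legitimate; both hold automatically when $f$ and $g$ belong to a Schwartz-type class, which is the setting of all later applications. Granting this, the scalar identity follows and the sum $\sum_{A,B}e_Ae_B(\cdots)$ reproduces the asserted formula; alternatively one simply invokes \cite[Proposition 3.9]{6}.
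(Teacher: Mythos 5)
The paper itself gives no proof of this proposition at all: it simply cites \cite[Proposition 3.9]{6}. Your proposal therefore does more than the paper does, and the argument you give is correct and is essentially the standard one (and the one behind the cited reference): since $a\cdot\nabla=\sum_l a^l\partial_l$ is a \emph{scalar} differential operator, it acts componentwise and satisfies the Leibniz rule $a\cdot\nabla(fg)=(a\cdot\nabla f)g+f(a\cdot\nabla g)$ with the factors kept in order, so the non-commutativity of $Cl(p,q)$ is harmless; the identity then reduces to the one-dimensional fundamental theorem of calculus along the flow of $a$ together with Fubini. Two small points are worth making explicit. First, you are right that the analytic content is the decay/integrability hypotheses needed for the boundary term and for Fubini; the paper states the proposition with no hypotheses at all, and in its later use (the Heisenberg inequality) it silently assumes the boundary term $\left[\int_{\mathbb{R}^{n-1}}(a\cdot x)|f(x)|^2d^{n-1}x\right]_{b\cdot x=-\infty}^{b\cdot x=\infty}$ vanishes, so your remark that one should work in a Schwartz-type class is exactly the missing fine print. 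Second, in the indefinite signature $\mathbb{R}^{p,q}$ the identification of the level sets of your parameter $t$ (where $x=y+ta$) with the hyperplanes $\{a\cdot x=\mathrm{const}\}$ requires $a\cdot a\neq 0$; for a null vector $a$ the boundary term as written in the proposition is not the flux through the ends of the flow lines. Since the proposition is stated over $\mathbb{R}^n$ and applied with $a=e_k$, this is only a caveat, but it deserves a sentence if the formula is meant to hold for arbitrary $a\in\mathbb{R}^{p,q}$.
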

 Recently, an interest is given to study square roots minus one  in the real Clifford algebra, i.e. Clifford numbers $i \in Cl(p,q)$ such that $i^2=-1$, since it generalize the complex imaginary unit. Note that every multivector with respect to any square roots can  split into commuting  and anticommuting parts.
\begin{lem}{\rm\cite{5}} Let $A\in Cl(p,q)$. Then with respect to a square root $i\in Cl (p,q)$ of $-1$, $A$ has a unique decomposition 
$$A=A_{+i}+A_{-i},$$
with 
$$A_{+i}=\frac{1}{2}\left(A+i^{-1}Ai\right)\quad {\rm and}\qquad A_{-i}=\frac{1}{2}\left(A-i^{-1}Ai\right).$$
Moreover one has
$$ A_{+i}i=iA_{+i} \quad{\rm and}\quad A_{-i}i=-iA_{-i}.\quad$$
\end{lem}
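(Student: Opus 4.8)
The plan is to read the two formulas in the statement as the eigenspace projections of a single linear involution and then read off everything by direct computation. First I would introduce the map $\sigma\colon Cl(p,q)\to Cl(p,q)$ defined by $\sigma(A)=i^{-1}Ai$. It is visibly $\mathbb{R}$-linear, and since $i^{2}=-1$ lies in the centre of $Cl(p,q)$ one computes $\sigma(\sigma(A))=i^{-1}(i^{-1}Ai)i=i^{-2}Ai^{2}=A$, so $\sigma^{2}=\mathrm{id}$. With $i^{-1}=-i$ (which follows from $i^{2}=-1$), the formulas in the statement become $A_{+i}=\frac{1}{2}(A+\sigma(A))$ and $A_{-i}=\frac{1}{2}(A-\sigma(A))$; adding them gives $A_{+i}+A_{-i}=A$, which is the existence of the decomposition.

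Next I would check the commutation relations. Writing $A_{+i}=\frac{1}{2}(A-iAi)$ and expanding, $A_{+i}\,i=\frac{1}{2}(Ai-iAi^{2})=\frac{1}{2}(Ai+iA)$ and $i\,A_{+i}=\frac{1}{2}(iA-i^{2}Ai)=\frac{1}{2}(iA+Ai)$, so $A_{+i}i=iA_{+i}$; similarly $A_{-i}=\frac{1}{2}(A+iAi)$ yields $A_{-i}i=\frac{1}{2}(Ai-iA)=-iA_{-i}$. Conceptually this just records that $\sigma(A_{\pm i})=\pm A_{\pm i}$, i.e. $A_{\pm i}$ lies in the $(\pm 1)$-eigenspace of $\sigma$, the stated identity being this rewritten by multiplying through by $i$.

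Finally, for uniqueness suppose $A=B_{+}+B_{-}$ with $B_{+}i=iB_{+}$ and $B_{-}i=-iB_{-}$. Multiplying these relations on the left by $i^{-1}$ gives $\sigma(B_{+})=B_{+}$ and $\sigma(B_{-})=-B_{-}$, so applying $\sigma$ to $A=B_{+}+B_{-}$ produces $i^{-1}Ai=B_{+}-B_{-}$. Solving the two linear equations $A=B_{+}+B_{-}$ and $i^{-1}Ai=B_{+}-B_{-}$ forces $B_{+}=\frac{1}{2}(A+i^{-1}Ai)=A_{+i}$ and $B_{-}=\frac{1}{2}(A-i^{-1}Ai)=A_{-i}$, which is uniqueness.

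I do not expect any real obstacle here: the entire content is the standard decomposition of a module over $\mathbb{R}$ into the $\pm 1$-eigenspaces of an order-two automorphism, and the only points needing attention are that $i^{2}=-1$ is central (so that $\sigma^{2}=\mathrm{id}$) and that $Cl(p,q)$ is noncommutative, so one must manipulate the products $iAi$ carefully and use $i^{-1}=-i$ rather than cancelling factors of $i$ past $A$.
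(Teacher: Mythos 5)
Your proof is correct and complete. Note that the paper itself offers no proof of this lemma --- it is quoted verbatim from the cited reference \cite{5} --- so there is nothing to compare against; your verification (existence by adding the two explicit formulas, the commutation relations by expanding $A_{\pm i}i$ and $iA_{\pm i}$ using $i^{-1}=-i$, and uniqueness by solving the linear system $A=B_{+}+B_{-}$, $i^{-1}Ai=B_{+}-B_{-}$) is the standard argument and fills the gap cleanly.
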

\section{Clifford-Fourier transform}\label{section3}
In this section, we recall the real Clifford Fourier transform and its properties. For more details, we refer the reader to \cite{9}. Furthermore, we add some results related to the Clifford Fourier transform and its kernel.
\begin{defn} Let $f\in L^1(\mathbb{R}^{p,q},Cl(p,q))$ and $i\in Cl(p,q)$ be a square root of $-1$. The Clifford-Fourier transform (CFT), with respect to $i$, is defined by 
\begin{equation}
{\mathcal{F}}^i\{f\}(w)= \displaystyle\int_{\mathbb{R}^{p,q}}f(x)e^{-iu(x,w)} d^nx,
\end{equation}
where $d^nx=dx_1\cdots dx_n$, $x,w\in \mathbb{R}^{p,q}$ and  $u: \mathbb{R}^{p,q}\times \mathbb{R}^{p,q}\rightarrow \mathbb{R}$. 
\end{defn}
\noindent Due to the non-commutativity of $Cl(p,q)$, we have right and left linearity for the Fourier transform defined above:
\begin{equation}
{\mathcal{F}}^i\{ \alpha h_1+\beta h_2\}(w)=\alpha {\mathcal{F}}^i\{h_1\}(w)+\beta {\mathcal{F}}^i\{h_2\}(w),\qquad
\end{equation}
\begin{equation}
{\mathcal{F}}^i\{h_1\alpha + h_2\beta\}(w)={\mathcal{F}}^i\{h_1\}(w)\alpha_{+i}+{\mathcal{F}}^{-i}\{h_1\}(w)\alpha_{-i}
\end{equation}
$$\qquad\qquad\qquad\qquad\quad +{\mathcal{F}}^i\{h_2\}(w)\beta_{+i}+{\mathcal{F}}^{-i}\{h_2\}(w)\beta_{-i},$$
for all $h_1,h_2 \in  L^1(\mathbb{R}^{p,q},Cl(p,q))$ and $\alpha ,\beta\in Cl(p,q)$.\\
In the rest of this work, we  will assume that 
\begin{equation}
u(x,w)=x\ast \widetilde{w}= \displaystyle\sum_{l=1}^n x^lw^l= \sum_{l=1}^n x_lw_l\quad
\end{equation}
and $i=\displaystyle\sum_{A}i_Ae_A$ be a square root $-1$ satisfying
\begin{equation}\label{cond}
\qquad \widetilde{i}=-i.
\end{equation}
The Clifford Fourier transform admits the following properties: 
\begin{itemize}
\item[i)]Inversion formula: for $h,{\mathcal{F}}^i\{h\}\in L^1(\mathbb{R}^{p,q},Cl(p,q))$, we have 
\begin{equation}\label{inversion}
h(x)=\mathcal{F}_{-1}^i\{\mathcal{F}^i\{h\}\}(x)= \frac{1}{(2\pi)^n}\displaystyle \int_{\mathbb{R}^{p,q}}\mathcal{F}^i\{h\}(w)e^{iu(x,w)}d^nw,
\end{equation}
where $d^nw=dw_1\cdots dw_n$ and $x,w\in \mathbb{R}^{p,q}$.
\item[ii)] Plancherel identity: for   $h_1, h_2\in L^2(\mathbb{R}^{p,q},Cl(p,q))$,
\begin{equation}
<h_1,h_2>=\frac{1}{(2\pi)^n}<\mathcal{F}^i\{h_1\},\mathcal{F}^i\{h_2\}>. 
\end{equation}
\item[iii)] Parseval identity:  for  $h\in L^2(\mathbb{R}^{p,q},Cl(p,q))$, we have
\begin{equation}\label{parseval}
||h||=\frac{1}{(2\pi)^n} ||\mathcal{F}^i\{h\}|| . 
\end{equation}
\item[4i)] Scaling  property: for $a\in\mathbb{R}\setminus\{0\}$,
\begin{equation}\label{scaling}
\mathcal{F}^i\{h_d\}(w)=\frac{1}{|a|^n}\mathcal{F}^i\{h\}(\frac{1}{a}w),
\end{equation}
with  $h_d(x):=h(ax),\, x\in \mathbb{R}^{p,q}$.
\end{itemize}
%\begin{theorem}  The Clifford Fourier transform satisfies the following derivative properties.
%\begin{enumerate}
%\item  For $h, h_l^{'}(x):=\partial_{x_l}h(x)\in L^1(\mathbb{R}^{p,q},Cl(p,q))$, $1\leq l\leq n$  
%\begin{equation}
%\mathcal{F}^i\{h_l^{'}\}(w)=w_l\mathcal{F}^i\{h\}(w)i
%\end{equation}
%\item For $h, h_l^{'}(x):=\partial_{x_l}h(x)\in L^1(\mathbb{R}^{p,q},Cl(p,q))$, $1\leq l\leq n$, 
%$$\mathcal{F}^i\{\nabla h\}(w)=w\mathcal{F}^i\{h\}(w)i.$$
%\item  For $h, h_l(x):=x_lh(x)\in L^1(\mathbb{R}^{p,q},Cl(p,q))$, $1\leq l\leq n$, 
%$$\mathcal{F}^i\{ xh\}(w)=\nabla_w\mathcal{F}^i\{h\}(w)i.$$
%\end{enumerate}
%\end{theorem}
\begin{prop}\label{Gauss}
The Gaussian function $f(x)=e^{-\frac{|x|^2}{2}}$  on $\mathbb{R}^n$ defines an eigenfunction for the Clifford Fourier transform:
\begin{equation}
\mathcal{F}^i\{ f\}(w)= (2\pi)^\frac{n}{2}f(w).
\end{equation}
\end{prop}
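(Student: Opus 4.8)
The computation reduces to the scalar case because the Gaussian $f(x)=e^{-|x|^2/2}$ is real-valued, so $f(x)=f_\emptyset(x)e_\emptyset$ with all other components vanishing, and the phase $e^{-iu(x,w)}$ is the only Clifford-valued factor in the integrand. The key observation is that $u(x,w)=\sum_{l=1}^n x_l w_l$ is a real bilinear form that \emph{separates} in the coordinates $x_1,\dots,x_n$, and likewise $|x|^2=\sum_{l=1}^n x_l^2$ separates (note that in the exponent of the Gaussian we use $|x|^2=\sum x_A^2$ applied to the vector $x$, giving $\sum x_l^2$, independent of the signature $\epsilon_l$). Hence the $n$-dimensional integral factors as a product of one-dimensional integrals, and only a single fixed square root $i$ of $-1$ appears throughout, so $e^{-iu(x,w)}$ behaves exactly like the ordinary complex exponential $e^{-\mathbf{i}t}$ once we note $i^2=-1$.

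\textbf{Step 1.} Write
\[
\mathcal{F}^i\{f\}(w)=\int_{\mathbb{R}^n} e^{-|x|^2/2}\,e^{-i\sum_{l=1}^n x_l w_l}\,d^nx
=\prod_{l=1}^n\left(\int_{\mathbb{R}} e^{-x_l^2/2}\,e^{-i x_l w_l}\,dx_l\right),
\]
which is legitimate by Fubini since $f\in L^1$ and $|e^{-iu(x,w)}|=1$ (the phase has unit modulus because $i$ satisfies $\widetilde i=-i$, so $e^{-iu}\widetilde{e^{-iu}}=e^{-iu}e^{iu}=1$). The factorization uses only that $i$ is a \emph{fixed} element commuting with itself, so the exponential of a sum splits as a product of exponentials.

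\textbf{Step 2.} Evaluate the one-dimensional integral. For fixed real $t$ and a fixed square root $i$ of $-1$, the function $g(t)=\int_{\mathbb{R}} e^{-s^2/2}e^{-ist}\,ds$ satisfies the ODE $g'(t)=-t\,g(t)$ (differentiate under the integral sign, then integrate by parts, using $i^2=-1$), with $g(0)=\int_{\mathbb{R}} e^{-s^2/2}\,ds=\sqrt{2\pi}$. Hence $g(t)=\sqrt{2\pi}\,e^{-t^2/2}$; this is the classical Gaussian Fourier identity, valid verbatim since all algebraic manipulations take place in the commutative subalgebra $\mathbb{R}\oplus\mathbb{R}i\cong\mathbb{C}$ generated by $i$. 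Applying this with $t=w_l$ gives $\int_{\mathbb{R}} e^{-x_l^2/2}e^{-ix_lw_l}\,dx_l=\sqrt{2\pi}\,e^{-w_l^2/2}$.

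\textbf{Step 3.} Multiply the $n$ factors:
\[
\mathcal{F}^i\{f\}(w)=\prod_{l=1}^n\sqrt{2\pi}\,e^{-w_l^2/2}=(2\pi)^{n/2}\,e^{-\frac12\sum_{l=1}^n w_l^2}=(2\pi)^{n/2}\,e^{-|w|^2/2}=(2\pi)^{n/2}f(w),
\]
which is the claimed identity. \emph{The main point to be careful about} is not any deep obstacle but rather the bookkeeping that ensures the noncommutativity of $Cl(p,q)$ never intervenes: one must check that $f$ is scalar-valued, that the phase exponential of a sum splits as a genuine product (true because the single element $i$ commutes with itself), and that differentiating under the integral and integrating by parts are justified by the Gaussian decay; once these are in place, the proof is identical to the classical one-variable computation.
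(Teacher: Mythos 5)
The paper states this proposition without any proof (it is imported from Hitzer's work \cite{9}), so there is no in-paper argument to compare against; your proposal supplies the standard computation and it is correct. The key points are all in order: $f$ is scalar-valued, $|x|^2=\sum_l x_l^2$ and $u(x,w)=\sum_l x_lw_l$ are signature-independent and separate in coordinates, and every Clifford-valued quantity in the integrand lives in the commutative subalgebra $\mathbb{R}\oplus\mathbb{R}i\cong\mathbb{C}$, so the exponential of the sum genuinely splits into a product and the one-dimensional ODE argument goes through verbatim. One small remark on your Fubini justification: the identity $|e^{-iu}|=1$ is not automatic for an arbitrary square root of $-1$, since by the paper's own computation $|e^{-iu}|^2=\cos^2 u+|i|^2\sin^2 u$; it does hold here because the standing hypothesis $\widetilde{i}=-i$ gives $|i|^2=i\ast\widetilde{i}=-\langle i^2\rangle_0=1$, and in any case mere boundedness of the phase would suffice for Fubini. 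With that observation made explicit, the proof is complete.
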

\begin{prop}\label{eigenfunction} Let $t>0$. Then 
\begin{equation}
\mathcal{F}^i\{e^{-t|.|^2}\}(w)=(2\pi)^\frac{n}{2}\frac{1}{(2t)^\frac{n}{2}}e^{-\frac{|w|^2}{4t}}.
\end{equation}
\end{prop}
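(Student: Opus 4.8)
The plan is to reduce Proposition~\ref{eigenfunction} to Proposition~\ref{Gauss} by a scaling argument, which is the cleanest route since the $t=\tfrac12$ case is already in hand. First I would write $e^{-t|x|^2}=e^{-\frac{|\sqrt{2t}\,x|^2}{2}}=f(\sqrt{2t}\,x)$, where $f$ is the standard Gaussian of Proposition~\ref{Gauss}; in the notation of the scaling property \eqref{scaling} this is $f_d$ with dilation parameter $a=\sqrt{2t}>0$. Applying \eqref{scaling} gives
\begin{equation}
\mathcal{F}^i\{e^{-t|\cdot|^2}\}(w)=\frac{1}{(\sqrt{2t})^n}\,\mathcal{F}^i\{f\}\!\left(\frac{1}{\sqrt{2t}}\,w\right).
\end{equation}
Then I would substitute the eigenfunction identity $\mathcal{F}^i\{f\}(v)=(2\pi)^{n/2}e^{-|v|^2/2}$ from Proposition~\ref{Gauss} with $v=w/\sqrt{2t}$, so that $|v|^2=|w|^2/(2t)$ and hence $e^{-|v|^2/2}=e^{-|w|^2/(4t)}$. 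Collecting the constants $\frac{1}{(\sqrt{2t})^n}=\frac{1}{(2t)^{n/2}}$ yields exactly
\begin{equation}
\mathcal{F}^i\{e^{-t|\cdot|^2}\}(w)=(2\pi)^{\frac{n}{2}}\frac{1}{(2t)^{\frac{n}{2}}}e^{-\frac{|w|^2}{4t}},
\end{equation}
which is the claim.

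There is essentially no hard step here: the only things to be careful about are that the scaling property \eqref{scaling} as stated allows any nonzero real $a$, and $\sqrt{2t}$ is a legitimate such choice since $t>0$; and that $|w|$ denotes the modulus on $\mathbb{R}^{p,q}$, which is homogeneous of degree one under real dilations, so $|\lambda w|=|\lambda|\,|w|$ for $\lambda\in\mathbb{R}$, making the exponent manipulation valid. If one preferred to avoid invoking \eqref{scaling}, an alternative would be to redo the direct computation behind Proposition~\ref{Gauss}: separate the kernel $e^{-iu(x,w)}=\prod_{l=1}^n e^{-i x_l w_l}$ (the scalar $u$ makes the factors commute), reduce to $n$ one-dimensional Gaussian integrals $\int_{\mathbb{R}}e^{-t x_l^2}e^{-i x_l w_l}\,dx_l$, complete the square treating $i$ as a formal square root of $-1$, and use $\int_{\mathbb{R}}e^{-t s^2}ds=\sqrt{\pi/t}$ to get $\sqrt{\pi/t}\,e^{-w_l^2/(4t)}$ per factor; multiplying the $n$ factors reproduces $(\pi/t)^{n/2}e^{-|w|^2/(4t)}=(2\pi)^{n/2}(2t)^{-n/2}e^{-|w|^2/(4t)}$. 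The main (minor) obstacle in that self-contained approach is justifying the Gaussian-integral/complete-the-square step with the Clifford square root $i$ in place of the ordinary imaginary unit, but this is handled exactly as in \cite{9} and is unnecessary once Proposition~\ref{Gauss} is granted. I would therefore present the short scaling proof as the main argument.
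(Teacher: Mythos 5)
Your proof is correct and is exactly the paper's argument: the paper's proof consists of the single sentence that the result follows from the scaling property \eqref{scaling} and Proposition \ref{Gauss}, and you have simply filled in the details (the choice $a=\sqrt{2t}$ and the bookkeeping of constants) correctly.
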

\begin{proof}
An application of \eqref{scaling} and Proposition \ref{Gauss} yields the desired result. 
\end{proof}
\begin{prop}\label{proposition2}
\begin{equation}
\mathcal{F}^i\{a\cdot\nabla f\}(w)=a\cdot w\mathcal{F}^i\{f\}(w)i
\end{equation}
\begin{proof}
Following  \cite[Corrolary 3.13]{6} with substituting $i_n$ by $i$, we  can show  that
\begin{equation} a\cdot\nabla e^{iu(x,w)}=a\cdot wie^{iu(x,w)}.
\end{equation}
Thus we have 
$$\quad a\cdot\nabla f(x)=a\cdot\nabla\frac{1}{(2\pi)^n}\displaystyle \int_{\mathbb{R}^{p,q}}\mathcal{F}^i\{f\}(w)e^{iu(x,w)}d^nw$$
$$\qquad\qquad\qquad=\frac{1}{(2\pi)^n}\displaystyle \int_{\mathbb{R}^{p,q}}\mathcal{F}^i\{f\}(w)\left(a\cdot\nabla e^{iu(x,w)}\right)d^nw$$
$$\qquad\qquad\qquad=\frac{1}{(2\pi)^n}\displaystyle \int_{\mathbb{R}^{p,q}}\mathcal{F}^i\{f\}(w)\left(a\cdot wie^{iu(x,w)}\right)d^nw$$
$$=\mathcal{F}_{-1}^i\{a\cdot w\mathcal{F}^i\{f\}(w)i\}.\qquad\quad$$
Using inversion formula, we get 
$$\mathcal{F}^i\{a\cdot\nabla f\}(w)=a\cdot w\mathcal{F}^i\{f\}(w)i.$$ 
\end{proof}
\end{prop}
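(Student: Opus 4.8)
The plan is to reduce the identity to a differentiation rule for the exponential kernel followed by a single integration by parts, the real work being to keep track of the non-commutativity of $Cl(p,q)$. The key input is the kernel identity
\begin{equation}
a\cdot\nabla\, e^{iu(x,w)}=(a\cdot w)\,i\,e^{iu(x,w)}=(a\cdot w)\,e^{iu(x,w)}\,i ,
\end{equation}
which holds because the phase $u(x,w)=\sum_{l=1}^n x_lw_l$ is real-valued and $\mathbb{R}$-linear in $x$, so that $a\cdot\nabla$ turns it into the scalar $a\cdot w$, which together with $i$ (hence with $e^{iu(x,w)}$) commutes; this is the content of \cite[Corollary 3.13]{6} with $i_n$ replaced by our square root $i$. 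The same computation with $-i$ in place of $i$ gives $a\cdot\nabla\, e^{-iu(x,w)}=-(a\cdot w)\,e^{-iu(x,w)}\,i$.

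Next I would start from $\mathcal{F}^i\{a\cdot\nabla f\}(w)=\int_{\mathbb{R}^{p,q}}\bigl(a\cdot\nabla f(x)\bigr)\,e^{-iu(x,w)}\,d^nx$ and apply the integration-by-parts formula of Proposition \ref{integration} with $g(x)=e^{-iu(x,w)}$. Under the decay/integrability hypotheses implicit in the $L^1$/$L^2$ framework where the later uncertainty statements live, the boundary term vanishes, leaving $\mathcal{F}^i\{a\cdot\nabla f\}(w)=-\int_{\mathbb{R}^{p,q}}f(x)\,\bigl(a\cdot\nabla e^{-iu(x,w)}\bigr)\,d^nx$. Substituting the kernel identity, the two minus signs cancel and the real scalar $a\cdot w$ pulls out, so
\begin{equation}
\mathcal{F}^i\{a\cdot\nabla f\}(w)=(a\cdot w)\left(\int_{\mathbb{R}^{p,q}}f(x)\,e^{-iu(x,w)}\,d^nx\right)i=(a\cdot w)\,\mathcal{F}^i\{f\}(w)\,i,
\end{equation}
which is the assertion. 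Equivalently, and perhaps more in the spirit of the preceding propositions, one may differentiate the inversion formula \eqref{inversion} under the integral sign, use $a\cdot\nabla e^{iu(x,w)}=(a\cdot w)\,i\,e^{iu(x,w)}$, recognize the outcome as $\mathcal{F}_{-1}^i\bigl\{(a\cdot w)\,\mathcal{F}^i\{f\}(w)\,i\bigr\}(x)$, and apply $\mathcal{F}^i$ to both sides.

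The step I expect to require the most care is the non-commutative bookkeeping: one must keep $f(x)$ (respectively $\mathcal{F}^i\{f\}(w)$) on the left of the kernel throughout, and one may move $i$ across $e^{\pm iu(x,w)}$ only because both are power series in the single element $i$ and hence commute, whereas $a\cdot w$, being a real scalar, commutes with everything and can be pulled out of the integral on either side. The analytic side-conditions — legitimacy of differentiating under the integral and vanishing of the boundary term in Proposition \ref{integration} — are the usual mild requirements on $f$ and $a\cdot\nabla f$ and are taken for granted at this level of generality, in line with \cite{6}.
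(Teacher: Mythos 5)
Your proof is correct, but your primary argument takes a different route from the paper's. The paper works backwards through the inversion formula: it differentiates $f(x)=\frac{1}{(2\pi)^n}\int\mathcal{F}^i\{f\}(w)e^{iu(x,w)}d^nw$ under the integral sign, uses the kernel identity $a\cdot\nabla e^{iu(x,w)}=(a\cdot w)\,i\,e^{iu(x,w)}$ to recognize $a\cdot\nabla f$ as $\mathcal{F}^i_{-1}\{(a\cdot w)\mathcal{F}^i\{f\}(w)i\}$, and then applies $\mathcal{F}^i$ to both sides --- this is exactly the ``equivalently'' alternative you sketch at the end. Your main argument instead attacks the forward transform directly, applying Proposition \ref{integration} with $g(x)=e^{-iu(x,w)}$ and the companion identity $a\cdot\nabla e^{-iu(x,w)}=-(a\cdot w)\,e^{-iu(x,w)}\,i$; the sign bookkeeping and the placement of $f(x)$ to the left of the kernel and of $i$ to its right are all handled correctly, and the observation that $i$ commutes with $e^{\pm iu(x,w)}$ because the latter is a power series in $i$ is the right justification. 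The trade-off is in the implicit hypotheses: your route needs the boundary term in the integration by parts to vanish (decay of $f$ along the $a$-direction), whereas the paper's route needs $\mathcal{F}^i\{f\}\in L^1$ so that the inversion formula applies and differentiation under that integral is legitimate. Both are left at the same informal level of rigor as the source \cite{6}, so neither is a gap relative to the paper's own standard.
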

\begin{thm}\label{noyau} Let  $z=a+jb\in \mathbb{C}\otimes \mathbb{R}^{p,q}$ with  $a,b\in \mathbb{R}^{p,q}$  and $j\in \mathbb{C}$ is the complex imaginary unit.
Put $u(x,z)=x\ast \widetilde{z}$,  $x\in \mathbb{R}^{p,q}$. Then we have

\begin{equation}|e^{-iu(x,z)}|\leq (1+|i|^2)^{\frac{1}{2}} e^{|x||b|}.\end{equation}
\end{thm}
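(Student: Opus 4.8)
The plan is to expand $e^{-iu(x,z)}$ into its real and imaginary parts with respect to the complex unit $j$, estimate each part separately via the modulus inequality \eqref{Cauchy} and the submultiplicativity of the Clifford norm, and then recombine.

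First I would compute $u(x,z)$ explicitly: since $u(x,z)=x\ast\widetilde{z}$ with $z=a+jb$ and $\widetilde{j}=-j$, linearity of the principal reverse gives $\widetilde{z}=\widetilde{a}-j\widetilde{b}$, so $u(x,z)=x\ast\widetilde{a}-j\,(x\ast\widetilde{b})=u(x,a)-j\,u(x,b)$, where both $u(x,a)$ and $u(x,b)$ are real scalars. Hence
\begin{equation}
-iu(x,z)=-i\,u(x,a)+ij\,u(x,b),
\end{equation}
and since $i\in Cl(p,q)$ commutes with the complex scalar $j$, the exponent splits as a sum of two commuting terms: a purely "Clifford-imaginary" part $-i\,u(x,a)$ (note $i^2=-1$, so $e^{-iu(x,a)}=\cos(u(x,a))-i\sin(u(x,a))$ has modulus controlled by $|i|$) and a part $ij\,u(x,b)$. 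Because these commute, $e^{-iu(x,z)}=e^{-iu(x,a)}\,e^{iju(x,b)}$, and by submultiplicativity $|e^{-iu(x,z)}|\le |e^{-iu(x,a)}|\,|e^{iju(x,b)}|$.

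Next I would bound each factor. For the first, $|e^{-iu(x,a)}|=|\cos(u(x,a))\cdot 1 - \sin(u(x,a))\cdot i|\le |\cos(u(x,a))|+|\sin(u(x,a))||i|\le (1+|i|^2)^{1/2}$ by Cauchy--Schwarz in $\mathbb{R}^2$ (using $|1|=1$ and that $1$ and $i$ are "orthogonal" in the sense that $1\ast\widetilde{i}=\langle i\rangle_0=0$, which follows from $\widetilde{i}=-i$). Alternatively one writes $e^{-iu(x,a)}=e^{iu(x,a)}_{+i}$-type decomposition, but the $\cos/\sin$ route is cleanest. For the second factor, $e^{iju(x,b)}=\cosh(u(x,b))+ij\sinh(u(x,b))$ since $(ij)^2=i^2j^2=(-1)(-1)=1$; its modulus is $\le |\cosh(u(x,b))|+|\sinh(u(x,b))||i|$... but this would introduce an extra $(1+|i|^2)^{1/2}$ factor, which is too lossy. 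The sharper route is to note $|ij|=|i|$ is irrelevant here — instead bound directly: $|e^{iju(x,b)}|^2 = \cosh^2(u(x,b)) + |i|^2\sinh^2(u(x,b))$ only if the cross term vanishes, which it does because $1\ast\widetilde{(ij)}=\langle ij\rangle_0 \cdot(\text{sign})$ and $\langle i\rangle_0=0$ forces $\langle ij\rangle_0=0$. So that gives $\cosh^2+|i|^2\sinh^2$, still not obviously $\le e^{2|x||b|}$ unless $|i|=1$. The cleanest fix, and what I expect the paper does, is to bound $|u(x,b)|\le |x||b|$ by \eqref{Cauchy} first, and then absorb: $|e^{iju(x,b)}|\le e^{|u(x,b)|}\cdot(\text{const})$; combined with the first factor one wants the total constant to be exactly $(1+|i|^2)^{1/2}$.

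The main obstacle, therefore, is the bookkeeping of the constant: one must avoid picking up $(1+|i|^2)$ twice. The resolution is to keep the exponent together rather than splitting it multiplicatively: write $e^{-iu(x,z)}=\sum_{k\ge0}\frac{(-i)^k(u(x,a)-ju(x,b))^k}{k!}$ and group by parity of $k$, or — more efficiently — observe that with $c:=u(x,a)$, $d:=u(x,b)$ real, one has $e^{-i(c-jd)}=e^{-ic}e^{ijd}$ and $e^{ijd}=\cosh d + ij\sinh d$, so
\begin{equation}
e^{-iu(x,z)}=(\cos c - i\sin c)(\cosh d + ij\sinh d).
\end{equation}
Multiplying out, the four terms are $\cos c\cosh d$, $j\cos c\sinh d\, i$, $-\sin c \cosh d\, i$, and $-j\sin c\sinh d\, i^2=j\sin c \sinh d$; collecting, the scalar-type coefficients pair with $1$ and $j$ (coefficient $\cos c\cosh d + j\sin c\sinh d$, which has modulus $\sqrt{\cos^2c\cosh^2d+\sin^2c\sinh^2d}\le\cosh d$ since $\cos^2c\cosh^2d+\sin^2c\sinh^2d=\cosh^2d-\cos^2c\le\cosh^2 d$... wait, $=\sinh^2 d + \cos^2 c \le \cosh^2 d$, good) and the $i$-type coefficient is $i(j\cos c\sinh d-\sin c\cosh d)$ with modulus $|i|\sqrt{\cos^2c\sinh^2d+\sin^2c\cosh^2d}=|i|\sqrt{\sinh^2d+\sin^2c}\le|i|\cosh d$. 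Since $1$, $j$, $i$, $ij$ are mutually orthogonal (all have zero scalar part except $1$), $|e^{-iu(x,z)}|^2\le \cosh^2 d + |i|^2\cosh^2 d=(1+|i|^2)\cosh^2 d$, and finally $\cosh d\le e^{|d|}=e^{|u(x,b)|}\le e^{|x||b|}$ by \eqref{Cauchy}. Taking square roots gives the claimed bound. I would write this up by first reducing to the real scalars $c,d$, then doing the $2\times 2$ trigonometric/hyperbolic estimate, citing \eqref{Cauchy} once at the end for $|u(x,b)|\le|x||b|$.
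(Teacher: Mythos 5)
Your final argument is correct and is essentially the paper's own proof: the paper likewise writes $e^{-iu(x,z)}=\cos(u(x,z))-i\sin(u(x,z))$ with $u(x,z)=u(x,a)-ju(x,b)$, bounds the (complex-valued) $\cos$ and $\sin$ coefficients by $e^{|u(x,b)|}$, uses $|e^{-iu(x,z)}|^2=|\cos|^2+|i|^2|\sin|^2$ (your orthogonality of $1$ and $i$), and applies \eqref{Cauchy} at the end to get $|u(x,b)|\le|x||b|$. Your explicit $\cosh/\sinh$ expansion is just the componentwise form of the same computation, so the two proofs coincide.
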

\begin{proof}
Observe  that $$u(x,z)=x\ast \widetilde{z}=x\ast\widetilde{a}+\widetilde{j}(x\ast\widetilde{b})$$$$=x\ast\widetilde{a}-j(x\ast\widetilde{b})$$$$\quad=u(x,a)-ju(x,b).$$
So we compute 
\begin{equation}\label{hardy}
ju(x,z)=u(x,b)+ju(x,a).
\end{equation} 
We should notice that 
$$e^{-iu(x,z)}=\cos(u(x,z))-i \sin(u(x,z)),$$
where 
$$\cos(u(x,z))=\frac{e^{ju(x,z)}+e^{-ju(x,z)}}{2}$$
and 
$$\sin(u(x,z))=\frac{e^{ju(x,z)}-e^{-ju(x,z)}}{2j}.$$
By \eqref{hardy} it follows that 
$$|\cos(u(x,z))|\leq \frac{|e^{ju(x,z)}|+|e^{-ju(x,z)}|}{2}\qquad\quad\qquad$$
$$\qquad\qquad\qquad\qquad\leq  \frac{|e^{u(x,b)}||e^{ju(x,a)}|+|e^{-u(x,b)}||e^{-ju(x,a)}|}{2}$$
$$\quad\leq  \frac{|e^{u(x,b)}|+|e^{-u(x,b)}|}{2}\,.$$
Similarly we obtain
$$|\sin(u(x,z))|\leq  \frac{|e^{u(x,b)}|+|e^{-u(x,b)}|}{2}.$$
See that for all $\gamma\in\mathbb{R}$, 
$$\displaystyle\frac{e^{-\gamma}+e^{\gamma}}{2}\leq e^{|\gamma|}.$$
So we deduce that 
$$|\cos(u(x,z))|\leq e^{|u(x,b)|} $$
and 
$$|\sin(u(x,z))|\leq e^{|u(x,b)|} .$$
Note that 
$$|e^{-iu(x,z)}|^2=|\cos(u(x,z))|^2+\displaystyle\sum_{A}|i_A|^2| \sin(u(x,z))|^2.$$
 Thus
 $$|e^{-iu(x,z)}|\leq \left(1+|i|^2\right)^{\frac{1}{2}}e^{|u(x,b)|}.$$
 Using \eqref{Cauchy}, we get
 $$|u(x,b)|=|x\ast b|\leq |x||b|.$$
 This leads to 
 $$|e^{-iu(x,z)}|\leq \left(1+|i|^2\right)^{\frac{1}{2}}e^{|x||b|}.$$ 
\end{proof}
\section{ Uncertainty Principles}\label{section4}
 In this section, we study uncertainty principles for the  Clifford Fourier transform stated in section 3. Actually, we establish Heisenberg's inequality and Hardy's theorem  in the setting of the real Clifford algebras $Cl(p,q)$. 
\subsection{Heisenberg's Inequality} 
We remind that Heisenberg's inequality was given for $n=3$ in \cite{11} and for $n=2,3( mod(4))$ in \cite{6}. We will prove Heisenberg's inequality  for  the real Clifford Fourier transform in $Cl(p,q)$ with a similar way. 
\begin{thm}[Directional Uncertainty Principle]\label{principle} Let $f\in L^2(\mathbb{R}^{p,q},Cl(p,q))$. Assume $F=\int_{\mathbb{R}^{p,q}}|f(x)|^2d^nx $. Then 
\begin{equation}
\displaystyle\int_{\mathbb{R}^{n}}(a\cdot x)^2|f(x)|^2d^nx\frac{1}{(2\pi)^n}\int_{\mathbb{R}^{n}}(b\cdot w)^2|\mathcal{F}^i\{f\}(w)|^2d^nw\geq (a\cdot\widetilde{b})^2\frac{ 1}{4}F^2,
\end{equation}
with $a,b$ arbitrary constant vectors.
\end{thm}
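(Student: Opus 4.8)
The plan is to transplant the classical Heisenberg argument into $Cl(p,q)$: use Proposition \ref{proposition2} to move the frequency moment over to the space side, the Parseval identity \eqref{parseval} to relate the two sides, and the integration by parts formula of Proposition \ref{integration} together with the two Cauchy--Schwarz inequalities available to us (the scalar one and the multivector one \eqref{Cauchy}). We may assume both integrals on the left-hand side are finite, since otherwise there is nothing to prove.

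First I would rewrite the frequency factor. Replacing $a$ by $b$ in Proposition \ref{proposition2} gives $\mathcal{F}^i\{b\cdot\nabla f\}(w)=(b\cdot w)\,\mathcal{F}^i\{f\}(w)\,i$. Because $b\cdot w$ is a real scalar and, thanks to $\widetilde i=-i$ and $i^2=-1$, one has $|Mi|^2=\langle Mi\,\widetilde i\,\widetilde M\rangle_0=\langle M\widetilde M\rangle_0=|M|^2$ for every $M\in Cl(p,q)$, it follows that $|\mathcal{F}^i\{b\cdot\nabla f\}(w)|^2=(b\cdot w)^2|\mathcal{F}^i\{f\}(w)|^2$. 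Integrating in $w$ and applying \eqref{parseval} to $b\cdot\nabla f$,
\begin{equation*}
\frac{1}{(2\pi)^n}\int_{\mathbb{R}^{n}}(b\cdot w)^2\,|\mathcal{F}^i\{f\}(w)|^2\,d^nw=\int_{\mathbb{R}^{n}}|b\cdot\nabla f(x)|^2\,d^nx ,
\end{equation*}
so the left-hand side of the theorem equals $\bigl(\int(a\cdot x)^2|f|^2\bigr)\bigl(\int|b\cdot\nabla f|^2\bigr)$.

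Next I would chain the inequalities. The scalar Cauchy--Schwarz inequality bounds this product below by $\bigl(\int_{\mathbb{R}^n}|a\cdot x|\,|f(x)|\,|b\cdot\nabla f(x)|\,d^nx\bigr)^2$, and then \eqref{Cauchy} in the form $|f|\,|b\cdot\nabla f|\ge|f\ast\widetilde{b\cdot\nabla f}|$ bounds it below by $\bigl(\int|a\cdot x|\,|f\ast\widetilde{b\cdot\nabla f}|\,d^nx\bigr)^2\ge\bigl|\int(a\cdot x)\,(f\ast\widetilde{b\cdot\nabla f})\,d^nx\bigr|^2$, using that $a\cdot x$ and $f\ast\widetilde{b\cdot\nabla f}$ are real scalars. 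The key identity, immediate from \eqref{geometric} since $b\cdot\nabla$ differentiates componentwise and $|f|^2=\sum_A f_A^2$, is $f\ast\widetilde{b\cdot\nabla f}=\tfrac{1}{2}\,b\cdot\nabla(|f|^2)$. Inserting this into Proposition \ref{integration} with the scalar function $x\mapsto a\cdot x$ in place of $f$, with $|f|^2$ in place of $g$, and with $b$ in place of $a$, and using that $b\cdot\nabla(a\cdot x)$ is the constant $a\cdot\widetilde b$, yields $\int(a\cdot x)\,(f\ast\widetilde{b\cdot\nabla f})\,d^nx=-\tfrac{1}{2}(a\cdot\widetilde b)\int|f|^2\,d^nx=-\tfrac{1}{2}(a\cdot\widetilde b)F$, so the left-hand side is at least $\tfrac{1}{4}(a\cdot\widetilde b)^2F^2$.

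The two Cauchy--Schwarz steps, the componentwise identity, and the constant bookkeeping are routine. The delicate points are the equality $|Mi|=|M|$, which is exactly where the hypothesis $\widetilde i=-i$ enters, and above all the vanishing of the boundary term in Proposition \ref{integration}; the latter is the main obstacle, since it fails for general $f\in L^2$. The standard remedy is to prove the inequality first for $f$ in a dense class of well-behaved functions (Schwartz functions, say, for which both moments are finite and $(a\cdot x)|f(x)|^2$ decays at infinity) and then pass to the general case by approximation, noting once more that the statement is vacuous unless both moments on the left are finite.
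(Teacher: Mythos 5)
Your proposal follows essentially the same route as the paper's own proof: Proposition \ref{proposition2} plus Parseval to convert the frequency moment into $\int|b\cdot\nabla f|^2$, the scalar and multivector Cauchy--Schwarz inequalities, the identity $2\,\widetilde{f}\ast(b\cdot\nabla f)=b\cdot\nabla|f|^2$, and Proposition \ref{integration}. If anything you are slightly more careful than the paper, in making the step $|Mi|=|M|$ explicit via $\widetilde{i}=-i$, in keeping $|a\cdot x|$ inside the intermediate integrals, and in flagging that the vanishing of the boundary term requires a density argument rather than holding for arbitrary $f\in L^2$.
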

\begin{proof} Using \eqref{normf}, \eqref{cond}  and  Proposition \ref{proposition2}, we obtain
$$\int_{\mathbb{R}^{n}}(b\cdot w)^2|\mathcal{F}^i\{f\}(w)|^2d^nw=<(b\cdot w\mathcal{F}^i\{f\}(w)i,b\cdot w\mathcal{F}^i\{f\}(w)i)>\quad\quad$$ 
$$\qquad\qquad\qquad\qquad\quad= <(\mathcal{F}^i\{b\cdot\nabla f\}(w),\mathcal{F}^i\{b\cdot\nabla f\}(w))>$$
$$\qquad\qquad\quad=\int_{\mathbb{R}^{n}}|\mathcal{F}^i\{b\cdot\nabla f\}(w)|^2d^nw.$$
An application of Parseval identity (see \eqref{parseval}) leads to 
\begin{equation}
\label{mir}\int_{\mathbb{R}^{n}}(b\cdot w)^2|\mathcal{F}^i\{f\}(w)|^2d^nw=
(2\pi)^n\int_{\mathbb{R}^{n}}|b\cdot\nabla f(w)|^2d^nw.
\end{equation}
See that for $\phi$, $\psi:\mathbb{R}^n\rightarrow \mathbb{C}$,
$$\int_{\mathbb{R}^{n}}|\phi(x)|^2d^nx\int_{\mathbb{R}^{n}}|\psi(x)|^2d^nx\geq \left(\int_{\mathbb{R}^{n}}\phi(x)\overline{\psi(x)}d^nx\right)^2. $$
Thus it follows that 
$$\displaystyle I:=\int_{\mathbb{R}^{n}}(a\cdot x)^2|f(x)|^2d^nx\frac{1}{(2\pi)^n}\int_{\mathbb{R}^{n}}(b\cdot w)^2|\mathcal{F}^i\{f\}(w)|^2 d^nw\qquad\qquad\qquad$$$$\qquad\qquad\qquad=\int_{\mathbb{R}^{n}}(a\cdot x)^2|f(x)|^2d^nx\int_{\mathbb{R}^{n}}|b\cdot\nabla f(w)|^2d^nw$$
$$\qquad\quad\geq\left(\int_{\mathbb{R}^{n}}(a\cdot x)|f(x)||b\cdot\nabla f(x)|d^nx\right)^2.$$
By \eqref{Cauchy} we get
$$I\geq\left(\int_{\mathbb{R}^{n}}(a\cdot x)|\widetilde{f(x)}\ast b\cdot\nabla f(x)|d^nx\right)^2$$
$$\qquad\geq \left(\int_{\mathbb{R}^{n}}(a\cdot x)\left(\widetilde{f(x)}\ast b\cdot\nabla f(x)\right)d^nx\right)^2.$$ 
Since $2\widetilde{f(x)}\ast b\cdot\nabla f(x)=b\cdot\nabla|f(x)|^2$, then
$$I\geq \frac{1}{4} \left(\int_{\mathbb{R}^{n}}(a\cdot x)\left(b\cdot\nabla|f(x)|^2\right)d^nx\right)^2.$$ 
Proposition \ref{integration} yields
$$\qquad I\geq\frac{1}{4}\left(\left[\int_{\mathbb{R}^{n-1}}(a\cdot x)|f(x)|^2d^{n-1}x\right]_{b\cdot x=-\infty}^{b\cdot x=\infty}-\int_{\mathbb{R}^n}[b\cdot\nabla(a\cdot x)] |f(x)|^2d^nx\right)^2$$
$$\geq\frac{1}{4}\left(0-\int_{\mathbb{R}^n} a\cdot\widetilde{b} |f(x)|^2d^nx\right)^2=\frac{1}{4} (a\cdot\widetilde{b})^2 F^2.\qquad\qquad\qquad\qquad$$
\end{proof}
\begin{cor}[Uncertainty Principle] We have 
\begin{equation}
\displaystyle\int_{\mathbb{R}^{n}}(a\cdot x)^2|f(x)|^2d^nx\frac{1}{(2\pi)^n}\int_{\mathbb{R}^{n}}(a\cdot w)^2|\mathcal{F}^i\{f\}(w)|^2d^nw\geq \frac{ 1}{4}F^2,\end{equation}
with equality when $f(x)=C_0e^{-k|x|^2}$, $C_0\in Cl(p,q)$ is an arbitrary constant multivector and $0<k\in\mathbb{R}$.
\end{cor}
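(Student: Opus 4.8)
The plan is to derive the inequality from the Directional Uncertainty Principle by taking $b=a$, and to verify the extremal Gaussians by a direct computation rather than by tracing equality through the proof of Theorem \ref{principle}.

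First I would set $b=a$ in Theorem \ref{principle}, which gives immediately
$$\int_{\mathbb{R}^{n}}(a\cdot x)^2|f(x)|^2d^nx\,\frac{1}{(2\pi)^n}\int_{\mathbb{R}^{n}}(a\cdot w)^2|\mathcal{F}^i\{f\}(w)|^2d^nw\geq (a\cdot\widetilde{a})^2\,\frac14\,F^2.$$
Since $a$ is a vector, \eqref{geometric} yields $a\cdot\widetilde{a}=\sum_l(a^l)^2=|a|^2$; as both sides are homogeneous of degree $2$ in $a$, one normalizes $|a|=1$ and then $(a\cdot\widetilde{a})^2=1$, giving the asserted estimate. (For a general vector $a$ the right-hand side carries the factor $(a\cdot\widetilde{a})^2=|a|^4$, so the normalization $|a|=1$ is implicit in the statement.)

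For the equality assertion, take $f(x)=C_0e^{-k|x|^2}$. By \eqref{mir} with $b$ replaced by $a$ (which rests on Proposition \ref{proposition2}), the Fourier-side factor becomes $\frac{1}{(2\pi)^n}\int_{\mathbb{R}^{n}}(a\cdot w)^2|\mathcal{F}^i\{f\}(w)|^2d^nw=\int_{\mathbb{R}^{n}}|a\cdot\nabla f(x)|^2d^nx$. Now $a\cdot\nabla|x|^2=2(a\cdot x)$, hence $a\cdot\nabla f(x)=-2k(a\cdot x)f(x)$ and $|a\cdot\nabla f(x)|^2=4k^2(a\cdot x)^2|f(x)|^2$, so the left-hand side of the corollary equals $4k^2\left(\int_{\mathbb{R}^{n}}(a\cdot x)^2|f(x)|^2d^nx\right)^2$. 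Thus equality is equivalent to the scalar identity $\int_{\mathbb{R}^{n}}(a\cdot x)^2|f(x)|^2d^nx=\frac1{4k}\int_{\mathbb{R}^{n}}|f(x)|^2d^nx$, which follows by expanding $(a\cdot x)^2=\sum_{l,m}a^la^mx_lx_m$, dropping the off-diagonal terms by parity, using the one-dimensional second moment $\int_{\mathbb{R}^{n}}x_l^2e^{-2k|x|^2}d^nx=\frac1{4k}\int_{\mathbb{R}^{n}}e^{-2k|x|^2}d^nx$, and invoking $|a|^2=1$; equivalently one evaluates both sides in closed form from Proposition \ref{eigenfunction} with $t=k$ and checks that they agree.

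The main obstacle, and the reason I would handle the extremal case by the explicit computation above rather than through the chain of inequalities in the proof of Theorem \ref{principle}, is that two of those steps are not transparently saturated by the Gaussian: the scalar Cauchy--Schwarz step, and the passage from $\int_{\mathbb{R}^{n}}(a\cdot x)\,|\widetilde{f(x)}\ast a\cdot\nabla f(x)|\,d^nx$ to $\int_{\mathbb{R}^{n}}(a\cdot x)\,(\widetilde{f(x)}\ast a\cdot\nabla f(x))\,d^nx$, which is delicate because $a\cdot x$ changes sign on $\mathbb{R}^{n}$. Verifying equality directly, using only Proposition \ref{proposition2}, \eqref{mir} and Gaussian integration, sidesteps that bookkeeping entirely.
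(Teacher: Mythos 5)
Your proposal is correct and follows essentially the same route as the paper: the inequality is obtained from Theorem \ref{principle} with $b=\pm a$ and the normalization $|a|^2=1$, and equality for the Gaussian is checked by a direct computation starting from \eqref{mir} and $a\cdot\nabla f=-2k(a\cdot x)f$, exactly as in the paper's proof. The only (immaterial) difference is the last step: you evaluate the second moment $\int_{\mathbb{R}^n}x_l^2e^{-2k|x|^2}d^nx=\frac{1}{4k}\int_{\mathbb{R}^n}e^{-2k|x|^2}d^nx$ explicitly, whereas the paper reaches the same conclusion via the identity $2\widetilde{f(x)}\ast a\cdot\nabla f(x)=a\cdot\nabla|f(x)|^2$ and Proposition \ref{integration}, so your worry about having to saturate the chain of inequalities in Theorem \ref{principle} does not arise for the paper either.
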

\begin{proof}
Put  $b=\pm a$ and $|a|^2=1$. The desired inequality is a straightforward consequence  of Theorem \ref{principle}. Now we move to prove the equality.\\
 For $f=C_0e^{-k|x|^2}$ with $C_0\in Cl(p,q)$, observe that
 $$a\cdot\nabla f=-2ka\cdot x f.$$
Combining this result with \eqref{mir}, we obtain 
$$\displaystyle J:=\int_{\mathbb{R}^{n}}(a\cdot x)^2|f(x)|^2d^nx\frac{1}{(2\pi)^n}\int_{\mathbb{R}^{n}}(a\cdot w)^2|\mathcal{F}^i\{f\}(w)|^2d^nw\qquad\qquad\qquad\qquad\qquad$$$$\qquad\qquad\qquad\qquad=\int_{\mathbb{R}^{n}}(a\cdot x)^2|f(x)|^2d^nx\int_{\mathbb{R}^{n}}|a\cdot\nabla f(w)|^2d^nw$$
$$\qquad\quad=4k^2\left(\int_{\mathbb{R}^{n}}(a\cdot x)^2|f(x)|^2d^nx\right)^2$$
$$\qquad\qquad\quad=4k^2\left(\int_{\mathbb{R}^{n}}a\cdot x a\cdot xf(x)\ast\widetilde{f(x)}d^nx\right)^2$$
$$\qquad\qquad\quad=\left(\int_{\mathbb{R}^{n}}a\cdot x( a\cdot\nabla f(x)\ast\widetilde{f(x)})d^nx\right)^2.$$
By the fact that  $2\widetilde{f(x)}\ast a\cdot\nabla f(x)=a\cdot\nabla|f(x)|^2$ and  Proposition \ref{integration}, we get
$$J=\frac{1}{4}\left(\int_{\mathbb{R}^{n}}a\cdot x a\cdot\nabla|f(x)|^2d^nx\right)^2$$
$$\quad=\frac{1}{4}\left(\int_{\mathbb{R}^{n}}a\cdot\nabla a\cdot x |f(x)|^2d^nx\right)^2$$
$$=\frac{1}{4}|a|^2F^2.\qquad\qquad\qquad\quad$$
Since $|a|^2=1$, we conclude the proof.
\end{proof}
\begin{thm}\label{theoremhei} For $a\cdot\widetilde{b}=0$, we find
\begin{equation}
\displaystyle\int_{\mathbb{R}^{n}}(a\cdot x)^2|f(x)|^2d^nx\frac{1}{(2\pi)^n}\int_{\mathbb{R}^{n}}(b\cdot w)^2|\mathcal{F}^i\{f\}(w)|^2d^nw\geq 0.\end{equation}
\end{thm}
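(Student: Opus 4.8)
The plan is to read this off directly from Theorem \ref{principle}. The Directional Uncertainty Principle applied to the same $f$ and the same pair of constant vectors $a,b$ gives
$$\int_{\mathbb{R}^{n}}(a\cdot x)^2|f(x)|^2d^nx\,\frac{1}{(2\pi)^n}\int_{\mathbb{R}^{n}}(b\cdot w)^2|\mathcal{F}^i\{f\}(w)|^2d^nw\geq (a\cdot\widetilde{b})^2\,\frac{1}{4}F^2,$$
and under the hypothesis $a\cdot\widetilde{b}=0$ the right-hand side is $0$. So the asserted inequality is just the degenerate case of Theorem \ref{principle}, where the Heisenberg-type lower bound collapses to zero.

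Alternatively, and even more elementarily, one can argue without invoking Theorem \ref{principle} at all: each of the two factors
$$\int_{\mathbb{R}^{n}}(a\cdot x)^2|f(x)|^2d^nx\qquad\text{and}\qquad\frac{1}{(2\pi)^n}\int_{\mathbb{R}^{n}}(b\cdot w)^2|\mathcal{F}^i\{f\}(w)|^2d^nw$$
is the integral of a pointwise non-negative function — the product of the real square $(a\cdot x)^2$ (resp. $(b\cdot w)^2$) with the non-negative modulus $|f(x)|^2$ (resp. $|\mathcal{F}^i\{f\}(w)|^2$), the latter being $\geq 0$ by \eqref{normf} — hence each factor is a non-negative element of $[0,+\infty]$, and the product of two non-negative quantities is non-negative.

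There is essentially no obstacle here. The hypothesis $a\cdot\widetilde{b}=0$ is not actually used to establish non-negativity; it only marks the regime in which the sharp bound of Theorem \ref{principle} carries no information. The single point that warrants a word of care is that one of the two integrals may be $+\infty$ (there is no integrability assumption on $(a\cdot x)f$ or $(b\cdot w)\mathcal{F}^i\{f\}$), but since the statement only claims a lower bound by $0$, this causes no trouble and the conclusion holds verbatim.
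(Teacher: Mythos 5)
Your first argument is exactly the paper's proof: Theorem \ref{principle} with $a\cdot\widetilde{b}=0$ makes the lower bound collapse to $0$. The alternative observation that both factors are trivially non-negative is correct as well, but the paper takes the first route, so your proposal matches it.
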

\begin{proof} See that for $a\cdot\widetilde{b}=0$, the right side of Theorem \ref{principle}'s inequality is $0$.
\end{proof}
\begin{thm} One has
\begin{equation}\displaystyle\int_{\mathbb{R}^{n}}|x|^2|f(x)|^2d^nx\frac{1}{(2\pi)^n}\int_{\mathbb{R}^{n}}|w|^2|\mathcal{F}^i\{f\}(w)|^2d^nw\geq n\frac{ 1}{4}F^2.
\end{equation}
\end{thm}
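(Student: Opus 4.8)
The plan is to reduce this global uncertainty inequality to the directional one already established, by applying the Uncertainty Principle corollary that follows Theorem~\ref{principle} along each coordinate direction $e_k$, $k=1,\dots,n$, and then summing over $k$.

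First I would record the two elementary facts that both make $a=e_k$ an admissible choice in that corollary and let us split $|x|^2$ and $|w|^2$ into directional pieces: for each $k$ one has $|e_k|^2=e_k\ast\widetilde{e_k}=\epsilon_k^2=1$, and $e_k\cdot x=\langle e_kx\rangle_0=\epsilon_k x^k=x_k$, so that $(e_k\cdot x)^2=x_k^2$ and $\sum_{k=1}^n(e_k\cdot x)^2=\sum_{k=1}^n x_k^2=|x|^2$, with the analogous identity for $w$. Writing $X_k:=\int_{\mathbb{R}^{n}}(e_k\cdot x)^2|f(x)|^2\,d^nx$ and $W_k:=\frac{1}{(2\pi)^n}\int_{\mathbb{R}^{n}}(e_k\cdot w)^2|\mathcal{F}^i\{f\}(w)|^2\,d^nw$, the left-hand side of the claimed inequality is
\[
\Bigl(\sum_{k=1}^n X_k\Bigr)\Bigl(\sum_{l=1}^n W_l\Bigr)=\sum_{k,l=1}^n X_kW_l .
\]

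Next, since every $X_k\ge 0$ and every $W_l\ge 0$, all the off-diagonal terms are nonnegative and may be discarded, so that $\sum_{k,l}X_kW_l\ge\sum_{k=1}^n X_kW_k$. Applying the corollary with $a=e_k$ (legitimate because $|e_k|^2=1$) gives $X_kW_k\ge\frac14 F^2$ for each $k$, and summing over the $n$ directions yields $\sum_{k=1}^n X_kW_k\ge \frac n4 F^2$, which is exactly the assertion.

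There is no real obstacle: the argument amounts to summing the directional estimates and dropping the cross terms. The only point requiring a little care is the index bookkeeping in the pseudo-Euclidean setting — checking $e_k\cdot x=x_k$ and $|e_k|^2=1$ for both $k\le p$ and $k>p$, so that the corollary genuinely applies in every coordinate direction. (One could instead apply Cauchy--Schwarz to $\sum_k\sqrt{X_kW_k}$ rather than merely discarding the off-diagonal terms, which would sharpen the constant to $\tfrac{n^2}{4}$; but the stated bound already follows from the diagonal terms alone.)
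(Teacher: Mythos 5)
Your proposal is correct and follows essentially the same route as the paper: decompose $|x|^2=\sum_k(e_k\cdot x)^2$ and $|w|^2=\sum_l(e_l\cdot w)^2$, expand the product into a double sum, bound each diagonal term below by $\tfrac14F^2$ via the directional uncertainty principle with $a=b=e_k$, and discard the off-diagonal terms (which the paper justifies by citing Theorem~\ref{theoremhei}, i.e.\ the case $a\cdot\widetilde{b}=0$, while you simply observe they are products of nonnegative integrals --- the same fact). Your parenthetical remark about sharpening the constant to $n^2/4$ via Cauchy--Schwarz on $\sum_k\sqrt{X_kW_k}$ goes beyond what the paper does but is not needed for the stated bound.
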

\begin{proof}
Note that $|x|^2=\displaystyle\sum_{k=1}^n(e_k\cdot x)^2$ and $|w|^2=\displaystyle\sum_{k=1}^n(e_k\cdot w)^2$.\\
We compute 
$$K:=\displaystyle\int_{\mathbb{R}^{n}}|x|^2|f(x)|^2d^nx\frac{1}{(2\pi)^n}\int_{\mathbb{R}^{n}}|w|^2|\mathcal{F}^i\{f\}(w)|^2d^nw\qquad\qquad\qquad$$
$$=\displaystyle\sum_{k,l=1}^n (e_k\cdot x)^2|f(x)|^2d^nx\frac{1}{(2\pi)^n}\int_{\mathbb{R}^{n}}(e_l\cdot w)^2|\mathcal{F}^i\{f\}(w)|^2d^nw\qquad$$
$$=\displaystyle\sum_{k=1}^n (e_k\cdot x)^2|f(x)|^2d^nx\frac{1}{(2\pi)^n}\int_{\mathbb{R}^{n}}(e_k\cdot w)^2|\mathcal{F}^i\{f\}(w)|^2d^nw\qquad$$
$$\quad+\sum_{k\neq l}^n (e_k\cdot x)^2|f(x)|^2d^nx\frac{1}{(2\pi)^n}\int_{\mathbb{R}^{n}}(e_l\cdot w)^2|\mathcal{F}^i\{f\}(w)|^2d^nw.\quad$$
From Theorem \ref{principle} and Theorem \ref{theoremhei}, we obtain 
$$\qquad\qquad K\geq \displaystyle\sum_{k=1}^n (e_k\cdot x)^2|f(x)|^2d^nx\frac{1}{(2\pi)^n}\int_{\mathbb{R}^{n}}(e_k\cdot w)^2|\mathcal{F}^i\{f\}(w)|^2d^nw$$
$$\geq \displaystyle\sum_{k=1}^n(e_k\cdot\widetilde{e_k})^2\frac{1}{4}F^2,\qquad\qquad\qquad\qquad\qquad\quad $$
which completes the proof.
\end{proof}
\subsection{Hardy's Theorem}
\begin{thm}
Let $ p,q$ be positive constants. Suppose $f$ is a measurable function on $\mathbb{R}^{p,q}$ satisfying the following estimates:
\begin{equation}\label{condition1}
|f(x)|\leq C e^{-p|x|^2},\quad x\in\mathbb{R}^{p,q}
\end{equation}
\begin{equation}\label{condition2}
|\mathcal{F}^i\{f\}(y)|\leq C e^{-q|y|^2},\quad y\in\mathbb{R}^{p,q},
\end{equation}
where $C$ is a positive constant. Then:
\begin{enumerate}
\item If $pq>\frac{1}{4}$, then $f=0$.
\item If  $pq=\frac{1}{4}$, then $f(x)= Ae^{-p|x|^2}$, with $A$ a Clifford constant.
\item If  $pq<\frac{1}{4}$, then there exist  many such functions.
\end{enumerate}
\end{thm}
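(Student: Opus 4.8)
The plan is to deduce the Clifford‑valued statement from the classical Hardy theorem for scalar functions, applied to the real components of $\mathcal{F}^i\{f\}$; the holomorphic extension that makes this possible is exactly what Theorem \ref{noyau} is there to provide. First note that \eqref{condition1} already forces $f\in L^1\cap L^2(\mathbb{R}^{p,q},Cl(p,q))$, so that $\mathcal{F}^i\{f\}$, the inversion formula \eqref{inversion}, the Plancherel identity \eqref{parseval} and Proposition \ref{eigenfunction} are all at our disposal. For $z=a+jb\in\mathbb{C}\otimes\mathbb{R}^{p,q}$ I would set $\mathcal{F}^i\{f\}(z)=\int_{\mathbb{R}^{p,q}}f(x)e^{-iu(x,z)}\,d^nx$ with $u(x,z)=x\ast\widetilde z$. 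Theorem \ref{noyau} yields the domination $|f(x)e^{-iu(x,z)}|\leq C(1+|i|^2)^{1/2}e^{-p|x|^2+|x||b|}$, which is integrable in $x$ and locally uniform in $z$, so differentiation under the integral sign makes $\mathcal{F}^i\{f\}$ entire in the complex variables $z_1,\dots,z_n$. Completing the square, $-p|x|^2+|x||b|=-p\bigl(|x|-\frac{|b|}{2p}\bigr)^2+\frac{|b|^2}{4p}$, which gives the sharp growth bound $|\mathcal{F}^i\{f\}(z)|\leq C'(1+|b|)^{n-1}e^{|b|^2/(4p)}$ with $|b|=|\mathrm{Im}\,z|$, while on the real slice \eqref{condition2} reads $|\mathcal{F}^i\{f\}(y)|\leq Ce^{-q|y|^2}$.

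Next I would pass to real components. Writing $\mathcal{F}^i\{f\}(z)=\sum_A G_A(z)e_A$, each $G_A$ is entire on $\mathbb{C}^n$, is real‑valued on $\mathbb{R}^n$ (because $e^{-iu(x,\xi)}=\cos u(x,\xi)-i\sin u(x,\xi)\in Cl(p,q)$ for real $\xi$), and by \eqref{Cauchy} satisfies $|G_A(z)|\leq|\mathcal{F}^i\{f\}(z)|$, hence $|G_A(z)|\leq C'(1+|b|)^{n-1}e^{|b|^2/(4p)}$ and $|G_A(\xi)|\leq Ce^{-q|\xi|^2}$ on $\mathbb{R}^n$. Restricting $G_A$ to the complex lines $\zeta\mapsto\zeta\omega$, $\omega\in S^{n-1}$, reduces matters to an entire function of one complex variable with imaginary growth rate $\frac{1}{4p}$ and Gaussian decay rate $q$ on the real axis; this is precisely the setting of the Phragm\'en--Lindel\"of form of Hardy's theorem, and $\frac{1}{4p}<q\Leftrightarrow pq>\frac14$ while $\frac{1}{4p}=q\Leftrightarrow pq=\frac14$. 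Consequently, if $pq>\frac14$ then every $G_A\equiv0$, so $\mathcal{F}^i\{f\}\equiv0$ and \eqref{parseval} gives $f=0$; if $pq=\frac14$ then $G_A(\xi)=c_A e^{-q|\xi|^2}$ with $c_A=G_A(0)\in\mathbb{R}$ (the line‑constant being forced to agree along every line through the origin), i.e.\ $\mathcal{F}^i\{f\}(w)=C_0 e^{-q|w|^2}$ with $C_0=\sum_A c_A e_A\in Cl(p,q)$.

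For the borderline case $pq=\frac14$, the inversion formula \eqref{inversion} then gives $f(x)=\frac{1}{(2\pi)^n}\int_{\mathbb{R}^{p,q}}C_0 e^{-q|w|^2}e^{iu(x,w)}\,d^nw=C_0\,\mathcal{F}_{-1}^i\{e^{-q|\cdot|^2}\}(x)$, and Proposition \ref{eigenfunction} read backwards (with $t=\frac{1}{4q}=p$) identifies $\mathcal{F}_{-1}^i\{e^{-q|\cdot|^2}\}(x)=(p/\pi)^{n/2}e^{-p|x|^2}$, so $f(x)=Ae^{-p|x|^2}$ with $A=(p/\pi)^{n/2}C_0$ a Clifford constant; conversely every such $f$ plainly satisfies \eqref{condition1}--\eqref{condition2}, confirming sharpness. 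Finally, when $pq<\frac14$ the interval $[\,p,\tfrac{1}{4q}\,]$ is nondegenerate, and for each $t$ in it the Gaussian $f(x)=e^{-t|x|^2}$ satisfies \eqref{condition1} (since $t\geq p$) while, by Proposition \ref{eigenfunction}, $|\mathcal{F}^i\{f\}(w)|=(\pi/t)^{n/2}e^{-|w|^2/(4t)}\leq Ce^{-q|w|^2}$ (since $\tfrac{1}{4t}\geq q$); one may further multiply by any constant multivector or by a suitable polynomial, so there are indeed infinitely many such functions.

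I expect essentially all the difficulty to be concentrated in the first step: justifying the entire extension and, above all, extracting from Theorem \ref{noyau} the \emph{sharp} exponent $|b|^2/(4p)$, since any looser estimate there would blur the $pq=\frac14$ threshold and wreck the trichotomy. Once the growth and decay constants are correctly matched, the appeal to the scalar Hardy theorem (and its one‑variable reduction) and the reassembly of the components are routine.
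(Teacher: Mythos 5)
Your proposal is correct and follows the same overall strategy as the paper: extend $\mathcal{F}^i\{f\}$ holomorphically using the kernel bound of Theorem \ref{noyau}, derive a Gaussian growth estimate in the imaginary directions, invoke a complex-analytic rigidity statement of Hardy--Phragm\'en--Lindel\"of type, and recover $f$ by the inversion formula together with Proposition \ref{Gauss} (or Proposition \ref{eigenfunction}); the treatment of the cases $pq>\frac14$ and $pq<\frac14$ is identical in both arguments. The differences are in execution. The paper first normalizes to $p=q=\frac12$ via the scaling property \eqref{scaling}, obtains the bound $|\mathcal{F}^i\{f\}(\lambda)|\leq C'e^{|\lambda|^2/2}$, and then applies the $n$-dimensional Lemma 2.1 of Sitaram--Sundari \cite{10} directly to the Clifford-valued function $\mathcal{F}^i\{f\}$. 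You instead keep $p,q$ general, extract the sharp exponent $|b|^2/(4p)$ by completing the square (correctly keeping the polynomial factor $(1+|b|)^{n-1}$ that the paper silently drops), decompose $\mathcal{F}^i\{f\}$ into its $2^n$ real entire components $G_A$, and reduce to the one-variable Hardy theorem along complex lines through the origin, gluing the line-constants via $c_\omega=G_A(0)$. Your route is more self-contained and, notably, it repairs the one soft spot in the paper's argument: \cite[Lemma 2.1]{10} is stated for scalar-valued entire functions, and the componentwise reduction (using $|G_A(z)|\leq|\mathcal{F}^i\{f\}(z)|$) is exactly what is needed to apply it legitimately to a multivector-valued function. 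The paper's route is shorter and pushes the complex analysis into a single citation. Both are valid; no gap in yours.
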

\begin{proof} According to \eqref{scaling}, we can assume that $p=q$.
For $pq<\frac{1}{4}$, by Proposition \ref{eigenfunction},  the functions $f(x)= Ce^{-t|x|^2}$ for $t\in]p,\frac{1}{4q}[$ and for some Clifford constant $C$ satisfies \eqref{condition1} and \eqref{condition2}.
Since (1) is deduced from (2), it is sufficient to prove (2). Hence we will show that if $p=q=\frac{1}{2}$, then $f(x)= A e^{-\frac{|x|^2}{2}} $ with $A$ a constant.\\
Using Theorem \ref{noyau} and  \eqref{condition1}, we obtain for all $\lambda \in \mathbb{C}\otimes \mathbb{R}^{p,q}$
$$|\mathcal{F}^i\{f\}(\lambda)|\leq \int_{\mathbb{R}^{p,q}}|f(x)e^{-iu(x,\lambda)}|d^nx\qquad$$
$$\qquad\qquad\qquad\quad\quad\leq \left(1+|i|^2\right)^{\frac{1}{2}} \int_{\mathbb{R}^{p,q}}|f(x)|e^{|x||Im(\lambda)|}d^nx$$
$$\qquad\qquad\qquad\quad\quad\quad\leq C\left(1+|i|^2\right)^{\frac{1}{2}} \int_{\mathbb{R}^{p,q}}e^{-\frac{|x|^2}{2}}e^{|x||Im(\lambda)|}d^nx$$
$$\leq C'e^{\frac{|Im(\lambda)|^2}{2}} ,\qquad\quad$$
\begin{equation}\label{hardy2}
\leq C'e^{\frac{|\lambda|^2}{2}},\qquad\quad\quad
\end{equation}
with $C'$ is a positive constant.\\
Furthermore we get $\mathcal{F}^i\{f\}$ is an entire function satisfying \eqref{hardy2} and \eqref{condition2}. Thus  \cite[Lemma.2.1]{10} implies that 
$$\mathcal{F}^i\{f\}(x)=Ae^{-\frac{|x|^2}{2}},\quad \text{for some constant} \quad A.$$
By \eqref{inversion} and Proposition \ref{Gauss}, it follows that $f(x)=A e^{-\frac{|x|^2}{2}}$.

\end{proof}

\end{document}